%% if you are submitting an initial manuscript then you should have submission as an option here
%% if you are submitting a revised manuscript then you should have revision as an option here
%% otherwise options taken by the article class will be accepted

%\documentclass{article}
%\usepackage{amsmath}

%\documentclass[reqno, oneside, backend = biber]{amsart}
\documentclass[11pt]{amsart}

%% but DO NOT pass any options (or change anything else anywhere) which alters page size / layout / font size etc

%% note that the class file already loads {amsmath, amsthm, amssymb}

%\newtheorem{thm}{Theorem}
%\newtheorem{lem}{Lemma}

\usepackage[utf8]{inputenc}
\usepackage[backend=bibtex,style=numeric]{biblatex}
\usepackage[vcentermath,enableskew]{youngtab}
\usepackage{tikz}
\usepackage[margin=1.3in]{geometry}
\usepackage{hyperref}
\hypersetup{colorlinks}
\usepackage{xurl}
\usepackage{tikz-cd}
\usepackage{comment}
\usepackage{mathrsfs}
\usepackage{float}
\usepackage{lipsum}
\usepackage{booktabs}

\def\SYT{\mathrm{SYT}}
\def\Res{\mathsf{Res}}

\DeclareMathOperator\sh{sh}

\DeclareMathOperator{\Span}{span}

\def\ev{\mathsf{ev}}

\def\SYT{\mathrm{SYT}}

\newcommand{\bB}{\mathbb{B}}

%\newcommand{\sE}{\mathscr{E}}

%\newcommand{\calZ}{\mathcal{Z}}

%\renewcommand{\bf}{\mathbf{f}}

%\renewcommand{\fi}{\mathfrak{i}}

%\renewcommand{\rm}{\mathrm{m}}

%\renewcommand{\rq}{\mathrm{q}}

%\newcommand{\ft}{\mathfrak{t}}

% MATHBB shortcuts

\newcommand{\BB}{\mathbb{B}}
\newcommand{\CC}{\mathbb{C}}

\newcommand{\NN}{\mathbb{N}}

\newcommand{\QQ}{\mathbb{Q}}

\newcommand{\ZZ}{\mathbb{Z}}

% Preferences
\renewcommand{\phi}{\varphi}

\newcommand{\ol}[1]{\overline{#1}}

% Algebra
%\DeclareMathOperator{\Pmod}{-pmod}

%\DeclareMathOperator{\norm}{Norm}
%\DeclareMathOperator{\trace}{Tr}

%\DeclareMathOperator{\rank}{rank}

%\DeclareMathOperator{\End}{End}

%\DeclareMathOperator{\Sym}{Sym}

\DeclareMathOperator{\Aut}{Aut}

\DeclareMathOperator{\sig}{sig}
\DeclareMathOperator{\id}{id}
%\DeclareMathOperator{\ev}{ev}
%\DeclareMathOperator{\pr}{pr}
%\DeclareMathOperator{\KL}{KL}

% Lie groups
%\newcommand{\GL}{GL}
%\newcommand{\SL}{SL}
%\newcommand{\Sp}{\mathbf{Sp}}

% Lie algebras

% \renewcommand{\Gr}{\cG r}

%\newcommand\iso\cong
\newcommand\into\hookrightarrow
\newcommand\onto\twoheadrightarrow

\newcommand{\nc}{\newcommand}
\nc{\la}{\lambda}
\nc{\Iso}{\mathsf{Iso}}
\nc{\Irr}{\mathsf{Irr}}
\nc{\Id}{\mathrm{Id}}

%%Homotopy category

%%Twist functor

%%Angle brackets

%Theorem Environments
\numberwithin{equation}{section}
\newtheorem{Theorem}[equation]{Theorem}
\newtheorem{Proposition}[equation]{Proposition}
\newtheorem{Lemma}[equation]{Lemma}

\newtheorem{Definition}[equation]{Definition}

\newtheorem{Remark}[equation]{Remark}
\theoremstyle{definition}
\newtheorem{Example}[equation]{Example}

%\addbibresource{sample.bib}
%\usepackage{biblatex}
%\usepackage[backend=bibtex]{biblatex}

%\bibliographystyle{amsalpha}

%\bibliography{./sample}
\addbibresource{sample.bib}
\begin{document}

%% define your title in the usual way
\title{Relations between generalised Gelfand-Tsetlin and Kazhdan-Lusztig bases of $S_n$}

\author{Ali Haidar}
\address{Ali Haidar: School of Mathematics and Statistics, University of Sydney, Australia}
\curraddr{}
\email{\href{mailto:ahai8016@uni.sydney.edu.au}{ahai8016@uni.sydney.edu.au}}
\thanks{}

%    author two information
\author{Oded Yacobi}
\address{Oded Yacobi: School of Mathematics and Statistics, University of Sydney, Australia}
\curraddr{}
\email{\href{mailto:oded.yacobi@sydney.edu.au}{oded.yacobi@sydney.edu.au}}
\thanks{}
\date{}

%% put your English abstract here, or comment this out if you don't have one yet
%% please don't use custom commands in your abstract / resume, as these will be displayed online
%% likewise for citations -- please don't use \cite, and instead write out your citation as something like (author year)
\begin{abstract} 
We prove that the Kazhdan-Lusztig basis of Specht modules is upper triangular with respect to all generalized Gelfand-Tsetlin bases constructed from any multiplicity-free tower of standard parabolic subgroups. 
\end{abstract}

\maketitle

\section{Introduction}

In their classical paper ``Relations between Young's natural and the Kazhdan-Lusztig representations of $S_n$'' \cite{GM88}, Garsia and MacLarnan show that Young's natural basis of a Specht module is unitriangular with respect to the Kazhdan-Lusztig (KL) basis. The aim of this note is to prove an analogue of this theorem for all generalised Gelfand-Tsetlin (GT) bases.

To explain more precisely, first note that  Young's natural basis is known to be upper-triangular with respect to either Young's seminormal or orthogonal bases \cite{AH21}. These bases are scalar multiples of each other, and are in fact examples of the standard GT basis. 
Recall that the latter is the basis obtained by restricting a Specht module along the tower of groups
\begin{align}\label{eq:tower}
1 < S_2 <S_3<\cdots <S_n.
\end{align}
It is well-defined up to scalar.
By combining the results of \cite{GM88} and \cite{AH21}, one can show that the GT basis is upper triangular with respect to the KL basis. Of course, to make this statement precise one has to be careful about the orderings of the bases, but we ignore that for the moment.

Now observe that, on the one hand, the GT basis depends on a choice of tower \eqref{eq:tower}. Indeed, there are many possible choices, and each one results in a different GT basis which we call ``generalised GT bases'' (Definition \ref{def:genGTbasis}). On the other hand, the KL basis is \emph{canonical} and so is independent of any choices. Therefore, it is reasonable to expect that the KL basis is upper triangular with respect to \emph{any} tower. This is exactly what we prove (Theorem \ref{thm:main}). 

Our proof of Theorem \ref{thm:main} is  independent of Garsia and MacLarnan's work. Moreover, the generalised GT bases are not upper triangular with respect to each other (cf. Example \ref{example}), so it is hopeless to try to bootstrap our result from the standard case to all generalised GT bases in the obvious manner. 

Instead, our argument is based on two important inputs: the classical result about the action of the longest element on the KL basis (Theorem \ref{thm:MBS}), and recent work of the second author with Gossow on the compatibility of the KL basis with restriction (Theorem \ref{prop:GY}). We also note that in order to formulate our theorem we introduce variations of the evacuation operator and dominance order on standard Young tableaux (Definition \ref{def:bij-and-ord}), which may be interesting in their own right. 

\section{Background}

Throughout we work over the field of rational numbers $\QQ$.  
Let $[a,b]=\{a,a+1,\hdots,b\}$, for $1\leq a < b$. Given a set $X$, $S_X=\Aut(X)$ denotes the group of permutations of $X$ and $S_n=S_{[1,n]}$. Let $w_0 \in S_n$ be the longest element, which interchanges $i$ and $n-i+1$ for every $i$, and more generally for an interval $I \subseteq [1,n]$, let $w_I \in S_I$ be the longest element.  Given a subgroup $H \leq S_n$ set $\ol{H}=w_0Hw_0$.

For groups $H < G$ and a representation $V$ of $G$, let $\Res^G_H(V)$ denote the restriction of $V$ to $H$. Given an isomorphism class $\la$ of an irreducible representation of $G$, let $\Iso_\la(V)$ denote the $\la$-isotypic component of $V$.
The irreducible Specht module of $S_n$ indexed by a partition $\la \vdash n$ is denoted $V^\la$. 

Recall that $\dim(V^\la)=\#\SYT(\la)$, where $\SYT(\la)$ denotes the set of standard Young tableaux of shape $\la$. These are $\la$-shaped arrays of numbers which are increasing along rows from left to right, and down columns. For example, ${\scriptsize\young(124,35)} \in \SYT(3,2)$. We number the rows from top to bottom, and let $\varepsilon_k(T)$ denote the row number containing $k$. For example, for the tableau above, $\varepsilon_3(T)=2$ and $\varepsilon_4(T)=1$. Let $\sh(T)$ denote the shape of the tableau $T$.

Recall Sch\"utzenberger's evacuation operator $\ev:\SYT(\la) \to \SYT(\la)$ is an involution on the set of standard Young tableaux \cite[Def. A1.2.8]{StanleyVol2}. For convenience, we also write $\ol{T} := \ev(T)$. Most importantly for us, evacuation encodes the action of $w_0$ on the Kazhdan-Lusztig basis (cf. Theorem \ref{thm:MBS} below). For $\la \vdash n$, set:
\begin{align*}
\SYT_k(\la) &=\{T \in \SYT(\la)\mid \varepsilon_n(T)=k \},\\
\SYT_{\ol{k}}(\la) &=\{T \in \SYT(\la)\mid \varepsilon_n(\ol{T})=k \}.
\end{align*}
Let $d(T)$ denote the tableau obtained from $T \in \SYT(\la)$ by removing the $n$-box. Conversely, suppose $\mu \vdash n-1$ is obtained by removing a box from $\la$. Then for $S \in \SYT(\mu)$,  $S^+ \in \SYT(\la)$ is the tableau obtained by adding an $n$-box in the correct row. The notation $S^+$ is ambiguous, as it depends on $\la$, but this will always be clear from context.

Of central interest for us is the Kazhdan-Lusztig (KL) basis of $V^\la$, denoted $\{c_T \mid T \in \SYT(\la)\}$. This is a canonical basis obtained from the (left) KL cellular representation of the  Hecke algebra of $S_n$ \cite{KL79, GM88}. The KL basis has close connections to the geometry of Schubert varieties and the Kazhdan-Lusztig conjectures. It is difficult to compute in general (it is precisely as difficult to compute as the Kazhdan-Lusztig polynomials), and in particular the action of a generic permutation on basis elements $c_T$ is hard to express.
Nevertheless, we have the following remarkable classical result about the action of the longest element, which is due separately to Berenstein-Zelevinsky, Mathas and Stembridge:

\begin{Theorem}\cite{BZ96, Mathas96, Stem96}\label{thm:MBS}
Let $\la \vdash n$ and let $T \in \SYT(\la)$. Then $w_0\cdot c_T = \pm c_{\ol{T}}$, and the sign depends only on $\la$.
\end{Theorem}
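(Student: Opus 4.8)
This is classical -- hence the multiple attribution; the route I would take is, in substance, the bimodule argument underlying those references. The statement already lives in $\QQ S_n$, and the plan is to exploit that $\QQ S_n$ equipped with its KL basis $\{C'_w\}$ is a cellular algebra, trading the computation inside a single cell module $V^\lambda$ for one in the associated cellular \emph{bimodule}.

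Concretely, let $\mathcal{J}_\lambda = \{w \in S_n : \sh(P(w)) = \lambda\}$ be the two-sided Kazhdan--Lusztig cell of shape $\lambda$ under Robinson--Schensted $w \leftrightarrow (P(w),Q(w))$, and let $B_\lambda = (\QQ S_n)^{\le \mathcal{J}_\lambda}/(\QQ S_n)^{<\mathcal{J}_\lambda}$ be the corresponding subquotient of $\QQ S_n$ along its cell filtration; it is a $(\QQ S_n,\QQ S_n)$-bimodule with $\QQ$-basis $\{\bar c_w : w \in \mathcal{J}_\lambda\}$, the images of the KL basis elements. Since $\QQ S_n$ is split semisimple and every left cell and every right cell inside $\mathcal{J}_\lambda$ has shape $\lambda$, comparison of module structures and dimensions yields a bimodule isomorphism $B_\lambda \cong V^\lambda \otimes_{\QQ} (V^\lambda)^*$ under which $\bar c_w \mapsto c_{P(w)} \otimes c^\vee_{Q(w)}$ (which tableau labels which factor being immaterial), where $\{c^\vee_S\}$ is the basis of $(V^\lambda)^*$ dual to the KL basis $\{c_S\}$ of $V^\lambda$; left multiplication by $h \in \QQ S_n$ then acts as $\rho_\lambda(h) \otimes \id$, where $\rho_\lambda$ is the representation on $V^\lambda$, and right multiplication acts on the dual factor.

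Next I would note that conjugation $w \mapsto w_0 w w_0$ is the Coxeter automorphism $s_i \mapsto s_{n-i}$ of $S_n$, under which Kazhdan--Lusztig polynomials are invariant ($P_{w_0 y w_0,\, w_0 w w_0} = P_{y,w}$); hence it permutes the KL basis of $\QQ S_n$, $w_0 C'_w w_0 = C'_{w_0 w w_0}$, and so $w_0 \cdot \bar c_w \cdot w_0 = \bar c_{w_0 w w_0}$ in $B_\lambda$. Transporting this across the isomorphism above and using Sch\"utzenberger's theorem that $w_0 w w_0 \leftrightarrow (\ev(P(w)),\,\ev(Q(w)))$, the identity becomes
\[
\rho_\lambda(w_0)\bigl(c_{P(w)}\bigr) \otimes A'\bigl(c^\vee_{Q(w)}\bigr) = c_{\ev(P(w))} \otimes c^\vee_{\ev(Q(w))},
\]
where $A'$ denotes the (right) action of $w_0$ on $(V^\lambda)^*$. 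As $w$ runs over $\mathcal{J}_\lambda$ the tableaux $P(w)$ and $Q(w)$ run independently over $\SYT(\lambda)$, so this pure-tensor identity forces $\rho_\lambda(w_0)(c_S) = \alpha\, c_{\ev(S)}$ for a scalar $\alpha = \alpha(\lambda) \in \QQ^\times$ that is independent of $S$. Finally $\alpha^2 c_S = \rho_\lambda(w_0)^2 c_S = \rho_\lambda(w_0^2) c_S = c_S$, so $\alpha = \pm 1$. Since $\ev(T) = \ol{T}$, this is exactly the assertion.

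The representation theory here is soft; the real content, and the step I expect to be the main obstacle, is Sch\"utzenberger's identity $w_0 w w_0 \leftrightarrow (\ev(P),\ev(Q))$ -- that the abstract shape-preserving involution of $\SYT(\lambda)$ produced by conjugation by $w_0$ is genuinely evacuation, and not merely some other involution agreeing with it in small rank -- together with setting up the cellular bimodule identification carefully enough to license the ``independent factors, hence constant sign'' step. Both are classical (Sch\"utzenberger for the former, Graham--Lehrer for the latter), which is why in practice one simply cites \cite{BZ96, Mathas96, Stem96}.
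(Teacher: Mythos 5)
The paper does not prove Theorem \ref{thm:MBS}; it cites \cite{BZ96, Mathas96, Stem96} and uses the result as an input. So there is no internal proof to compare against, and your proposal should be judged on its own merits.

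Your argument is sound and is in fact close in spirit to the proof in \cite{Mathas96}. The logical skeleton is correct: (i) the two-sided cell quotient $B_\lambda$ is, by split semisimplicity and left/right cell-module identifications, a simple bimodule, hence $\cong V^\lambda \otimes (V^\lambda)^*$; (ii) the Coxeter diagram automorphism $s_i \mapsto s_{n-i}$ fixes Kazhdan--Lusztig polynomials, so conjugation by $w_0$ permutes the KL basis via $C'_w \mapsto C'_{w_0 w w_0}$ and hence permutes $\{\bar c_w\}$; (iii) Sch\"utzenberger's theorem translates $w_0 w w_0$ to $(\ev P, \ev Q)$ on tableaux; (iv) the pure-tensor identity then splits into a left and a right factor, forcing a single scalar $\alpha$, and $w_0^2 = 1$ gives $\alpha^2 = 1$. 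Steps (ii)--(iv) are all airtight, and your own remark correctly identifies the load-bearing facts (Sch\"utzenberger's theorem, and the cellular structure needed for (i)).

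The one step you should spell out (and which you flag as ``setting up the cellular bimodule identification carefully'') is why $\bar c_w \mapsto c_{P(w)} \otimes c^\vee_{Q(w)}$ is not merely some linear bijection but a bimodule isomorphism. The point is exactly the Kazhdan--Lusztig cellularity property: for $h \in \QQ S_n$, the left action $h\cdot\bar c_w$ is supported on $\{w' : Q(w') = Q(w)\}$ with coefficients depending only on $P(w)$ and $h$, and symmetrically for the right action. Granting that (which is Kazhdan--Lusztig plus the Robinson--Schensted parametrisation of cells, i.e.\ the content of the Graham--Lehrer cell datum you invoke), the map is a bimodule morphism by inspection and a bijection by dimension count, and the rest of your argument goes through. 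A residual ambiguity in normalising the dual basis (your parenthetical ``which tableau labels which factor being immaterial'') only rescales $\alpha$ by a nonzero constant and so does not affect the conclusion that $\alpha$ is constant in $S$ and squares to $1$. In short: correct, and a legitimate route to a theorem the paper itself takes as a black box.
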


In recent work of the second author with Gossow, we generalised the above theorem to the class of separable permutations \cite{GY22,GY23}. For present purposes, we recall a result from these works that will play a central role.

Let $\la \vdash n$ and suppose $\la$ has $r$ removable boxes, which appear in rows $1\leq a_1 <a_2 \cdots < a_r$. Let $\mu_k \vdash n-1$ be the partition obtained by removing the box in row $a_k$, and let $V_{a}^\la := \Span\{c_T \mid \varepsilon_n(T) \leq a \}$. This defines a filtration:
$$
0 \subset V_{ a_1}^\la \subset V_{ a_2}^\la \subset \cdots \subset V_{ a_r}^\la=V^\la.
$$

\begin{Proposition}\cite[Theorem 1.2]{GY22}\label{prop:GY}
The subspaces $V_{ a_k}^\la$ are $S_{n-1}$ invariant, and the map 
\[
f_k:V_{ a_k}^\la/V_{ a_{k-1}}^\la \longrightarrow V^{\mu_k}, \;\; c_T+V_{ a_{k-1}}^\la \longmapsto \;c_{d(T)}
\]
is an isomorphism of $S_{n-1}$-modules.
\end{Proposition}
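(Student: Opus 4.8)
The plan is to work with the Kazhdan--Lusztig $W$-graph of $V^\lambda$. Via the Robinson--Schensted correspondence, $V^\lambda$ is the left cell module attached to a fixed recording tableau of shape $\lambda$; its $W$-graph has vertex set $\SYT(\lambda)$, the $\tau$-invariant of a vertex $T$ being its descent set $\Des(T)=\{\,i:\varepsilon_{i+1}(T)>\varepsilon_i(T)\,\}\subseteq\{1,\dots,n-1\}$ (read as a set of simple reflections $s_i$), and the edge weights being the Kazhdan--Lusztig $\mu$-function $\mu(T',T)$. Since $\QQ[S_{n-1}]$ with $S_{n-1}=S_{[1,n-1]}$ is generated by $c_{s_1},\dots,c_{s_{n-2}}$, and each such generator acts on a cell module by
\[
c_{s_i}\cdot c_T \;=\; \big(\text{a scalar depending only on whether }i\in\Des(T)\big)\,c_T \;+\!\!\!\sum_{T'\,:\ i\in\Des(T')\setminus\Des(T)}\!\!\! \mu(T',T)\, c_{T'},
\]
the whole $S_{n-1}$-action on $V^\lambda$ is encoded by $\Des$ and $\mu$. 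Two elementary observations feed in. First, for a removable row $a$ of $\lambda$, with $\mu$ the partition obtained by deleting that box, $d$ restricts to a bijection $\{\,T\in\SYT(\lambda):\varepsilon_n(T)=a\,\}\xrightarrow{\ \sim\ }\SYT(\mu)$ (add an $n$-box back in row $a$); since the $n$-box of any $T$ is removable, $\varepsilon_n(T)\le a_k\iff\varepsilon_n(T)\in\{a_1,\dots,a_k\}$, so the subspaces $V^\lambda_{a_1}\subseteq\cdots\subseteq V^\lambda_{a_r}$ form the claimed chain. Second, for $i\le n-2$ the entries $i,i+1$ occupy the same cells in $T$ and in $d(T)$, so $i\in\Des(T)\iff i\in\Des(d(T))$; as $d(T)$ has $n-1$ boxes this says $\Des(T)\cap\{1,\dots,n-2\}=\Des(d(T))$.

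Everything then reduces to the following compatibility of the $\mu$-function with the statistic $\varepsilon_n$.

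\smallskip
\noindent\textbf{Key Lemma.} \emph{Suppose $T,T'\in\SYT(\lambda)$ with $\mu(T',T)\ne 0$ and $i\in\Des(T')\setminus\Des(T)$ for some $i\le n-2$. Then $\varepsilon_n(T')\le\varepsilon_n(T)$; and if $\varepsilon_n(T')=\varepsilon_n(T)=a_k$, then $\mu(T',T)$ equals the corresponding structure constant of $V^{\mu_k}$ in its Kazhdan--Lusztig basis, i.e.\ $\mu(T',T)=\mu(d(T'),d(T))$ computed in $S_{n-1}$.}
\smallskip

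Granting the Key Lemma, the proposition follows quickly. The inequality clause shows $c_{s_i}\cdot c_T\in V^\lambda_{\varepsilon_n(T)}$ for every $i\le n-2$, so each $V^\lambda_{a_k}$ is stable under $c_{s_1},\dots,c_{s_{n-2}}$ and hence under $S_{n-1}$. For the isomorphism, fix $k$ and pass to $V^\lambda_{a_k}/V^\lambda_{a_{k-1}}$, with basis $\{\,c_T+V^\lambda_{a_{k-1}}:\varepsilon_n(T)=a_k\,\}$. By the two observations together with the equality clause, for each $i\le n-2$ the action of $c_{s_i}$ on this basis is transported by $d$ to its action on the Kazhdan--Lusztig basis of $V^{\mu_k}$: the diagonal coefficient matches because $\Des(T)\cap\{1,\dots,n-2\}=\Des(d(T))$; the off-diagonal terms with $\varepsilon_n(T')=a_k$ match because $\mu(T',T)=\mu(d(T'),d(T))$; and the terms with $\varepsilon_n(T')<a_k$ vanish in the quotient. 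Therefore $c_T+V^\lambda_{a_{k-1}}\mapsto c_{d(T)}$ intertwines the generators of $\QQ[S_{n-1}]$ and is a bijection of bases, so it is precisely the isomorphism $f_k$ of $S_{n-1}$-modules.

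The main obstacle is the Key Lemma, which carries the real content. To prove it I would use that $\mu$-values are controlled by local data and that $\mu(w',w)\ne 0$ forces $w$ and $w'$ to be Bruhat comparable with lengths of opposite parity, reducing the statement to one purely about Robinson--Schensted and the position of $n$; the inequality should then follow by induction on $n$, tracking how a $W$-graph edge active for some $s_i$ with $i\le n-2$ can move the $n$-box, while the equality clause should fall out of locality once $T$ and $T'$ lie in a common $\varepsilon_n$-fibre. An alternative, more representation-theoretic route would try to bootstrap from the known action of the longest element (Theorem~\ref{thm:MBS}): conjugation by $w_0$ identifies $\Res^{S_n}_{S_{[1,n-1]}}$ with $\Res^{S_n}_{S_{[2,n]}}$ and sends $c_T\mapsto\pm c_{\ol T}$, swapping the two extreme removable rows, so an induction on $n$ peeling off those extreme rows — where the dimension count $\dim V^\lambda_{a_k}=\sum_{j\le k}\#\SYT(\mu_j)$ forces the filtration step — might reduce everything to a single boundary case, though making this clean would require carefully bookkeeping the evacuation and dominance bijections of Definition~\ref{def:bij-and-ord}.
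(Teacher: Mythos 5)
This proposition is not proved in the paper at all: it is quoted verbatim from \cite[Theorem~1.2]{GY22}, so there is no ``paper proof'' for me to compare against. Judged on its own terms, your proposal is a reasonable reduction but is not a proof. The framing via the $W$-graph on $\SYT(\lambda)$ is correct, and the two elementary observations (that $d$ is a bijection on each $\varepsilon_n$-fibre, and that $\Des(T)\cap\{1,\dots,n-2\}=\Des(d(T))$) are sound; given your Key Lemma, the argument that the $V^\lambda_{a_k}$ are $S_{n-1}$-stable and that $d$ intertwines the $c_{s_i}$-actions on graded pieces does go through.

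The gap is that the Key Lemma is exactly where the entire content of \cite[Theorem~1.2]{GY22} lives, and you do not establish it. The inequality clause ($\mu(T',T)\neq0$, $i\in\Des(T')\setminus\Des(T)$, $i\le n-2$ forces $\varepsilon_n(T')\le\varepsilon_n(T)$) and the equality clause ($\mu(T',T)=\mu(d(T'),d(T))$ on a common $\varepsilon_n$-fibre) are genuine facts about Kazhdan--Lusztig $\mu$-values, and ``locality of $\mu$'' or ``Bruhat comparability with opposite parity'' is not nearly enough to deduce them; without a real argument here you have only restated the proposition in $W$-graph language. Your two sketched strategies are both plausible starting points --- a direct $\mu$-function analysis along the lines of Barbasch--Vogan/Geck, or a bootstrap from the $w_0$-action and evacuation (which is in fact close to the mechanism in \cite{GY22}) --- but neither is carried out. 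As written, the proposal reduces one unproved statement to another unproved statement and should be regarded as a plan rather than a proof.
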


Below we use binary sequences to label chains of subgroups. Let $\BB_n$ be the set of binary sequences of length $n$. Given $b=b_1\cdots b_n \in \BB_n$, set 
\begin{align*}
    \ol{b}=\ol{b_1}\cdots\ol{b_n},\\
    b_\dagger = b_2\cdots b_n,
\end{align*}
where $\ol{0}=1$ and vice-versa. 
\section{Generalised Gelfand-Tsetlin bases}\label{sect:gt-bases}

%In this section we'll apply the above results   to study the connections between the Gelfand-Tsetlin (GT) bases and KL bases of a Specht module.
%To begin we recall the construction of GT bases in greater generality than the standard treatments since we consider all multiplicity-free chains at once.

A \textbf{multiplicity-free chain} of $S_n$ is a sequence of groups $(G_1,G_2,\hdots, G_{n-1})$ such that 
\begin{align*}
    S_n=G_1 \supset G_{2} \supset \cdots \supset G_{n-1},
\end{align*}
and $G_m$ is a standard parabolic subgroup of $S_n$ isomorphic to  $S_{n-m+1}$.  The \textbf{standard chain} is $$(S_n,S_{n-1},\hdots,S_2).$$ Writing $G_m=S_{I_m}$ for $I_m$ an interval, $I_{m+1}$ is obtained from $I_m$ by either removing the smallest or largest element. We can thus index the multiplicity-free chains by binary sequences $b=b_1\cdots b_{n-2}$, where $b_{m+1}=0$ if $I_{m+1}$ is obtained from $I_m$ by  removing the  largest element, and is $1$ otherwise. The standard chain correspond to the zero sequence.

For example, if $n=4$ we have four multiplicity-free chains, corresponding to the four two-step paths in the directed graph below:
\[\begin{tikzcd}
 &  & \left[1,4\right] \ar[dl]\ar[dr] &  &  \\
 & \left[1,3\right] \ar[dl]\ar[dr] &  & \left[2,4\right] \ar[dl]\ar[dr] &  \\
\left[1,2\right] & & \left[2,3\right] & & \left[3,4\right]
\end{tikzcd}\]

\noindent From left to right, these chains are labelled by $00,01,10$ and $11$. In general, $S_n$ has $N:=2^{n-2}$ multiplicity-free chains. Note that our multiplicity-free chains stop at rank $S_2$ since there is no ``choice'' of $S_1$ at the next step.

Let $(G_1,\hdots,G_{n-1})$ be a multiplicity-free chain labelled by $b$, and let $V$ be an irreducible representation of $S_n$.
\begin{Definition}\label{def:genGTbasis}
     A vector $v \in V$ is a \textbf{$b$-Gelfand-Tsetlin (GT) vector} if for $1\leq m <n$, there exist $\mu^m \vdash n-m+1$ such that
    $$
    v \in \Iso_{\mu^m}\Res^{S_n}_{G_m}(V).
    $$
    In this case, we say $v$ is of \textbf{type $T$}, where $T \in \SYT(\lambda)$ corresponds to the sequence of partitions $(\mu^1,\mu^2,\hdots,\mu^{n-1},(1))$. A basis $\{v_T^b \mid T\in \SYT(\la) \}$ is a \textbf{$b$-GT basis} if for all $T$, $v_T^b$ is a $b$-GT vector of type $T$. We call these \textbf{generalised GT bases}.
\end{Definition}

The following proposition is an immediate consequence of the classical fact that the restriction of any irreducible representation of $G_m$ to $G_{m+1}$ is multiplicity-free:

\begin{Proposition}
    Let $\la \vdash n$ and let $b \in \bB_{n-2}$. Then there exists a $b$-GT basis $\{v_T^b \mid T\in \SYT(\la) \}$ of $V^\la$ which is unique up to scalar.
\end{Proposition}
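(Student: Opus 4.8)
The plan is to prove existence and uniqueness (up to scalar) by induction on $n$, exploiting the fact that $G_2 \cong S_{n-1}$ is itself a standard parabolic, and that the tail $b_\dagger$ of $b$ labels a multiplicity-free chain of $G_2$. First I would handle the base case $n = 2$, where $\bB_0$ is a singleton and a $b$-GT basis of $V^\la$ is just any basis of the (at most one-dimensional) irreducible $V^\la$, so existence and uniqueness up to scalar are trivial. For the inductive step, fix $\la \vdash n$ and $b = b_1 b_2 \cdots b_{n-2} \in \bB_{n-2}$, and let $(G_1, \ldots, G_{n-1})$ be the corresponding multiplicity-free chain with $G_2 = S_{I_2}$ where $I_2$ is obtained from $[1,n]$ by deleting its smallest or largest element according to $b_1$. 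Then $(G_2, G_3, \ldots, G_{n-1})$ is a multiplicity-free chain of $G_2 \cong S_{n-1}$ labelled by $b_\dagger = b_2 \cdots b_{n-2} \in \bB_{n-3}$.

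The key step is the decomposition
\[
\Res^{S_n}_{G_2}(V^\la) = \bigoplus_{\mu} V^\mu,
\]
which is multiplicity-free by the classical branching rule, where $\mu$ ranges over partitions of $n-1$ obtained from $\la$ by deleting a removable box (after relabelling $G_2 \cong S_{n-1}$ via the order-preserving bijection $I_2 \to [1,n-1]$). For each such $\mu$, the summand $V^\mu$ is canonically identified with the isotypic component $\Iso_\mu \Res^{S_n}_{G_2}(V^\la)$. A vector $v \in V^\la$ is a $b$-GT vector of type $T$ if and only if, setting $\mu = \mu^2 = \sh(d(T))$, we have $v \in V^\mu \subseteq \Res^{S_n}_{G_2}(V^\la)$ and, viewed inside $V^\mu$, the vector $v$ is a $b_\dagger$-GT vector of type $d(T)$ for the chain $(G_2, \ldots, G_{n-1})$ of $G_2$. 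The standard Young tableaux of shape $\la$ are in bijection with pairs $(\mu, S)$ where $\mu$ is obtained from $\la$ by deleting a removable box and $S \in \SYT(\mu)$, via $T \mapsto (\sh(d(T)), d(T))$; this matches the indexing on both sides.

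Now I would apply the inductive hypothesis to each $V^\mu$ (an irreducible of $G_2 \cong S_{n-1}$) and the chain labelled $b_\dagger$: there is a $b_\dagger$-GT basis $\{v_S^{b_\dagger} \mid S \in \SYT(\mu)\}$ of $V^\mu$, unique up to scalars. Transporting these bases into $V^\la$ through the direct sum decomposition and collecting them over all $\mu$ gives a basis $\{v_T^b \mid T \in \SYT(\la)\}$ of $V^\la$, and by the characterization above each $v_T^b$ is a $b$-GT vector of type $T$; this proves existence. For uniqueness, suppose $\{w_T \mid T \in \SYT(\la)\}$ is another $b$-GT basis. Then each $w_T$ lies in the summand $V^{\mu^2(T)}$ and is a $b_\dagger$-GT vector of type $d(T)$ there, so by the inductive uniqueness statement applied within each $V^\mu$ we get $w_T = c_T\, v_T^b$ for some nonzero scalar $c_T$, as required.

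The main obstacle — though it is bookkeeping rather than a genuine difficulty — is keeping the chain of identifications straight: the group $G_2$ is a parabolic $S_{I_2}$ for an interval $I_2$ that need not start at $1$, so to invoke the inductive hypothesis one must transport everything along the unique order-preserving isomorphism $S_{I_2} \xrightarrow{\sim} S_{n-1}$, and one must check that this isomorphism carries the chain $(G_2, \ldots, G_{n-1})$ to a genuine multiplicity-free chain of $S_{n-1}$ labelled precisely by $b_\dagger$ (each $G_m$ for $m \ge 2$ is a standard parabolic of $S_n$ contained in $S_{I_2}$, hence a standard parabolic of $S_{I_2}$, and the successive "delete smallest/largest" choices are unchanged). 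Once this identification is in place, the induction runs smoothly: the only substantive input is the multiplicity-freeness of $\Res^{S_n}_{S_{n-1}}$, which is exactly the classical branching rule quoted before the statement.
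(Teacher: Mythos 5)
Your argument is correct and is essentially the paper's, which gives no formal proof but asserts the proposition as an ``immediate consequence of the classical fact that the restriction of any irreducible representation of $G_m$ to $G_{m+1}$ is multiplicity-free.'' Your induction on $n$ through the decomposition $\Res^{S_n}_{G_2}(V^\la) = \bigoplus_\mu V^\mu$ is just the careful unwinding of that one-line observation, with no new ingredient.
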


The $b$-GT basis is related to the standard GT basis by the action of $S_n$, as we now explain. 
First note that a permutation $w\in S_n$ acts  on a multiplicity-free chain by conjugation: 
\[
w\cdot (G_1,\hdots,G_{n-1}) = (wG_1w^{-1},\hdots,wG_{n-1}w^{-1}).
\]
This may not be  a multiplicity-free chain since the conjugate of a standard parabolic subgroup is not necessarily a standard parabolic. But the multiplicity-free chains are always related in this way. This is best explained by example.

%For $w\in S_n$ let $\calZ^w=w\cdot \calZ^e:=(w([1,n]),w([1,n-1]),\hdots)$, and set $v_T^w:=v^{\calZ^w}_T$ (so that $v_T^e$ are the standard GT bases).

Consider the  two chains in $S_6$ in the figure below.  To simplify the picture we replaced the intervals with nodes, so here the red path is the standard chain and the blue path is the chain $([1,6] , [2,6] , [2,5] , [2,4] , [3,4])$. We'll find a permutation $u \in S_6$ that conjugates the standard chain to the blue one.

\usetikzlibrary{shapes.geometric}
\[
\begin{tikzpicture}[scale = 0.8]
  % Define the height and width of the triangular array
  \def\height{5}
  \def\width{2*\height-1}

  % Loop to create nodes
  \foreach \row in {1,...,\height} {
    \foreach \col in {1,...,\row} {
      % Calculate the x-coordinate of the current node
      \pgfmathtruncatemacro\xcoord{\width - \row + 2*\col}
      % Calculate the y-coordinate of the current node
      \pgfmathtruncatemacro\ycoord{\height - \row}
      % Draw the node
      \node at (\xcoord, \ycoord) {$\bullet$};
    }
  }

  % Arrows between nodes
  \foreach \row in {1,...,\height} {
    \foreach \col in {1,...,\row} {
      % Calculate the coordinates of the current node
      \pgfmathtruncatemacro\xcoord{\width - \row + 2*\col}
      \pgfmathtruncatemacro\ycoord{\height - \row}
      % Draw arrows from the current node to the two nodes directly below it
      \ifnum\row<\height 
        \draw (\xcoord, \ycoord) -- (\xcoord-1, \ycoord-1);
        \draw (\xcoord, \ycoord) -- (\xcoord+1, \ycoord-1);
      \fi
    }
  }

    \draw[red, very thick] (10,4) -- (6,0);

    \draw[blue, very thick] (10,4) -- (11,3);

    \draw[blue, very thick] (11,3) -- (9,1);

    \draw[blue, very thick] (9,1) -- (10,0);
  
\end{tikzpicture}
\]

\noindent The key point is that given a multiplicity-free chain, suppose the $j$-th group is the symmetric group on an interval $I$. Then the action of the long element $w_{I}$ reflects the chain about the vertical ray pointed downwards from the node labelled by $I$. It doesn't affect the part of the chain above that node. For example, the action of the long element $w_{[2,6]}$ flips the blue chain above to:

\usetikzlibrary{shapes.geometric}
\[
\begin{tikzpicture}[scale = 0.8]
  % Define the height and width of the triangular array
  \def\height{5}
  \def\width{2*\height-1}

  % Loop to create nodes
  \foreach \row in {1,...,\height} {
    \foreach \col in {1,...,\row} {
      % Calculate the x-coordinate of the current node
      \pgfmathtruncatemacro\xcoord{\width - \row + 2*\col}
      % Calculate the y-coordinate of the current node
      \pgfmathtruncatemacro\ycoord{\height - \row}
      % Draw the node
      \node at (\xcoord, \ycoord) {$\bullet$};
    }
  }

  % Arrows between nodes
  \foreach \row in {1,...,\height} {
    \foreach \col in {1,...,\row} {
      % Calculate the coordinates of the current node
      \pgfmathtruncatemacro\xcoord{\width - \row + 2*\col}
      \pgfmathtruncatemacro\ycoord{\height - \row}
      % Draw arrows from the current node to the two nodes directly below it
      \ifnum\row<\height 
        \draw (\xcoord, \ycoord) -- (\xcoord-1, \ycoord-1);
        \draw (\xcoord, \ycoord) -- (\xcoord+1, \ycoord-1);
      \fi
    }
  }

    %\draw[red, very thick] (10,4) -- (6,0);

    \draw[green, very thick] (10,4) -- (13,1);

    \draw[green, very thick] (13,1) -- (12,0);

    %\draw[blue, very thick] (9,1) -- (10,0);
  
\end{tikzpicture}
\]

Now, if we begin with the red path and  reflect at the first node, then at the second node and then at the fourth node we obtain the blue path.  These nodes correspond to the intervals $[1,6], [2,6]$ and $[2,4]$ respectively. Therefore the desired permutation is given by $u=w_{[2,4]}w_{[2,6]}w_{[1,6]} \in S_6^{sep}$, and is uniquely determined by the chain we started with. We note that $u$ is always a separable permutation, but we won't use this here.

%Now to a multiplicity-free chain we've associated both a binary sequence $b$ and a  permutation $u$. 

\begin{Lemma}\label{lem:GTcompat}
Let $(G_1,\hdots,G_{n-1})$ be a multiplicity-free chain labelled by $b$ and corresponding to the permutation $u$.
Then for every $T \in \SYT(\la)$, $u\cdot v_T^0=v_T^b$ (up to scalar). Consequently, $w_0\cdot v_T^b = v_T^{\ol{b}}$.
\end{Lemma}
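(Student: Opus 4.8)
The plan is to prove the first assertion by induction on $n$, tracking how conjugation by $u$ transports the defining isotypic conditions of a GT vector along the chain. Recall that $u = w_{I_{j_k}} \cdots w_{I_{j_2}} w_{I_{j_1}}$ where $[1,n] = I_{j_1} \supsetneq \cdots$ are the intervals at the reflection nodes; more structurally, $u$ is built so that conjugation by $u$ carries the standard chain $(S_n, S_{n-1}, \dots, S_2)$ to the chain labelled by $b$. First I would make precise the recursive structure of $u$: if $b = b_1 b_\dagger$ with $b_1 \in \{0,1\}$, then $u = w_{[1,n]}^{b_1} \cdot u'$ where $u'$ is the permutation attached to the chain of $S_{n-1}$ (on the interval $I_2$) labelled by $b_\dagger$, with $w_{[1,n]}^0 = \id$ and $w_{[1,n]}^1 = w_0$. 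The heart of the argument is then: if $v$ is a $0$-GT vector of type $T$ in $V^\la$, I must show $u \cdot v$ is a $b$-GT vector of type $T$.

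The key step is an equivariance observation. If $v \in \Iso_{\mu^m} \Res^{S_n}_{S_{I_m}}(V)$ and $w \in S_n$, then $w \cdot v \in \Iso_{\mu^m} \Res^{S_n}_{wS_{I_m}w^{-1}}(V)$, because conjugation by $w$ intertwines the $S_{I_m}$-action on $V$ with the $wS_{I_m}w^{-1}$-action on $V$ (via $x \mapsto w x w^{-1}$), and this isomorphism of groups preserves the labelling of irreducibles — crucially, when $wS_{I_m}w^{-1} = S_{I'}$ is again a standard parabolic and $w$ maps $I_m$ to $I'$ order-preservingly or order-reversingly in a compatible way, the partition label $\mu^m$ is unchanged (since $w$ conjugates the standard parabolic generators among themselves). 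I would verify that each reflection $w_{I_{j_i}}$ appearing in $u$ acts on the relevant sub-interval by the order-reversing map, and that the composite $u$ sends each standard-chain interval $[1,n-m+1]$ to the corresponding interval $I_m^b$ of the $b$-chain in a way that respects the identification $S_{I_m^b} \cong S_{n-m+1}$ used to define GT-types. Then $u \cdot v_T^0$ satisfies all the defining isotypic conditions for a $b$-GT vector of type $T$, and since such a vector is unique up to scalar, $u \cdot v_T^0 = v_T^b$ up to scalar.

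For the consequence, the cleanest route is to avoid re-deriving things and instead apply the first part twice. Note $w_0 = w_{[1,n]}$ is the permutation attached to the chain labelled $\ol{0\cdots 0}$? — this needs care, since $w_0$ reflects only at the top node, giving the chain $([1,n], [1,n-1], \dots)$ conjugated once, which is labelled by $1 0 0 \cdots 0$, not $\ol{b}$ for general $b$. So instead I would argue directly: $w_0 \cdot v_T^b = w_0 u \cdot v_T^0$ up to scalar, and I claim $w_0 u$ is (up to an element centralising the whole picture, or up to reordering that doesn't affect the chain) the permutation attached to the chain labelled $\ol b$. Indeed, $w_0$ reflects the entire chain about the top node, which flips every subsequent choice of ``remove largest'' versus ``remove smallest'', i.e. sends the $b$-chain to the $\ol b$-chain; composing, $w_0 u$ conjugates the standard chain to the $\ol b$-chain. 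Hence $w_0 \cdot v_T^b = v_T^{\ol b}$ up to scalar.

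The main obstacle I anticipate is purely bookkeeping: carefully matching the combinatorial recipe for $u$ (which reflections at which nodes) with the binary label $b$, and checking that the various order-reversing interval identifications compose correctly so that the \emph{type} $T$ — a sequence of partitions — is genuinely preserved rather than transformed by some tableau operation. In particular one must confirm that conjugating a standard parabolic $S_{[a,c]}$ by $w_{[a,c]}$ or $w_{[a,d]}$ (with $[a,c] \subseteq [a,d]$) yields another standard parabolic with the ``obvious'' isomorphism to a symmetric group, so that isotypic labels transport without change; this is where a wrong identification would silently introduce an evacuation-type twist. Everything else is a routine induction once this compatibility is nailed down.
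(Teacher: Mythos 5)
Your proposal is essentially the paper's argument: the paper notes directly that $uS_{n-m+1}u^{-1}=G_m$ transports the defining isotypic containments $v_T^0\in\Iso_{\mu^m}\bigl(\Res^{S_n}_{S_{n-m+1}}(V^\la)\bigr)$ into $u\cdot v_T^0\in\Iso_{\mu^m}\bigl(\Res^{S_n}_{G_m}(V^\la)\bigr)$ with no induction on $n$ or explicit factorisation of $u$ needed, and for the consequence it observes, exactly as you do, that $w_0u$ is the permutation attached to the $\ol{b}$-chain. The ``evacuation-type twist'' you flag as a potential danger cannot actually occur, since the discrepancy between conjugation-by-$u$ and the canonical order-preserving identification $G_m\cong S_{n-m+1}$ is an automorphism of $S_{n-m+1}$ induced by a bijection of $[1,n-m+1]$, hence is inner and therefore fixes every partition label.
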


 \begin{proof}
Let $(\mu^1,\hdots,\mu^n)$ be the sequence of partitions corresponding to $T$.
The vector $v_T^0$ is determined (up to scalar) by the containments $v_T^0 \in \Iso_{\mu^m}(\Res_{S_{n-m+1}}^{S_n}(V^\la))$. Since $uS_{n-m+1}u^{-1} = G_m$, $u\cdot v_T^0 \in \Iso_{\mu^m}(\Res_{G_m}^{S_n}(V^\la))$, and so $u\cdot v_T^0=v_T^b$ (up to scalar).

Now, note that $\ol{b}$ labels the multiplicity-free chain $(\ol{G_1},\hdots,\ol{G}_{n-1})$, and the corresponding permutation for this chain is $w_0u$. Therefore $w_0\cdot v_T^b = w_0u\cdot v_T^{0}=v_T^{\ol{b}}$. 
 \end{proof}

\section{The main theorem}
To state our main result we introduce a family of partial orders and bijections on $\SYT(\la)$ labelled by binary sequences.

\begin{Definition}\label{def:bij-and-ord}
    Let $\la \vdash n$ and $b \in \BB_{n-2}$.
    \begin{enumerate}
\item Define a bijection $\phi_b=\phi_{\la,b} \in \Aut(\SYT(\la))$ recursively:
\begin{align*}
    \phi_{\la,b}(T) = \begin{cases}
        \phi_{\mu,b_\dagger}(d(T))^+ &\text{ if } b_1=0 ,\\
         \phi_{\mu,\ol{b_\dagger}}(d(\ol{T}))^+ &\text{ if } b_1=1,
    \end{cases}
\end{align*}
where $\mu=\sh(d(T))$ or $\sh(d(\ol{T}))$.
\item Define a partial order $\leq_b=\leq_{\la,b}$ recursively: $S \leq_{\la,b} T$ if and only if:
\begin{align*}
\begin{cases}
\varepsilon_n(S) < \varepsilon_n(T) \text{ or } \varepsilon_n(S) = \varepsilon_n(T) \text{ and } d(S) \leq_{\mu,b_\dagger} d(T) \text{ if } b_1=0, \\
\varepsilon_n(\ol{S}) < \varepsilon_n(\ol{T}) \text{ or } \varepsilon_n(\ol{S}) = \varepsilon_n(\ol{T}) \text{ and } d(\ol{S}) \leq_{\mu,\ol{b_\dagger}} d(\ol{T}) \text{ if } b_1=1,
\end{cases}
\end{align*}
where $\mu=\sh(d(T))$ or $\sh(d(\ol{T}))$.
\end{enumerate}
\end{Definition}

In the table below we describe the bijection and  partial order  in some examples. 
The partial order is described by associating a sequence of numbers $\sig_b(T)$ to a tableau $T$, and ordering the sequences \textit{reverse lexicographically}.

\renewcommand{\arraystretch}{1.5}
\[
  \begin{array}{c|c|c}
     \mathbf{b} &  \mathbf{\phi_b(T)} & \mathbf{\sig_b(T)}  \\
 \hline
00\cdots0 & \id &  \big(\varepsilon_1(T),\hdots,\varepsilon_n(T)\big) \\
\hline
 11\cdots1 & \ol{T} & \big(\varepsilon_1(\ol{T}),\hdots,\varepsilon_n(\ol{T})\big) \\
 \hline
01\cdots1 & (\ol{d(T)})^+ & \Big(\varepsilon_1(\ol{T}),\hdots,\varepsilon_{n-1}(\ol{T}),\varepsilon_n(T)\Big) \\
\hline
10\cdots0 & (\ol{d(\ol{T})})^+ & \Big(\varepsilon_1\big(\ol{d(\ol{T})}\big),\hdots,\varepsilon_{n-1}\big(\ol{d(\ol{T})}\big),\varepsilon_n\big(\ol{T}\big)\Big)
  \end{array}
\]
\vspace{.5cm}

We can now state our main result, and make a few remarks immediately following:

\begin{Theorem}\label{thm:main}
\begin{enumerate}
\item Let $\la \vdash n$ and $b \in \bB_{n-2}$. Then for any $T \in \SYT(\la)$,
    \begin{align}\label{eq:Th}
c_T = a_Tv_{\phi_b(T)}^b + \sum_{S <_b T}a_Sv_{\phi_b(S)}^b,
    \end{align}
    where $a_T\neq 0$. 
\item The $b$-GT basis can be normalised so that the coefficients $a_T,a_S \in \ZZ$.
\end{enumerate}
\end{Theorem}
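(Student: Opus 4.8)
The natural approach is induction on $n$, using Proposition~\ref{prop:GY} (restriction compatibility of the KL basis) to peel off the $n$-box and Theorem~\ref{thm:MBS} (action of $w_0$) to handle the $b_1=1$ case. I will treat $b_1 = 0$ and $b_1 = 1$ separately. The base case $n \le 2$ is trivial since $\BB_{n-2}$ is empty and there is a unique tableau. Throughout, write $\mu = \sh(d(T))$ (or $\sh(d(\ol T))$ in the $b_1=1$ case), $a = \varepsilon_n(T)$, and recall $G_2 = S_{I_2}$ where $I_2 = [1,n-1]$ if $b_1 = 0$ and $I_2 = [2,n]$ if $b_1 = 1$.

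\textbf{Case $b_1 = 0$.} Here $G_2 = S_{[1,n-1]} = S_{n-1}$, so the $b$-GT basis restricts along the standard inclusion $S_{n-1} < S_n$ at the first step, and the $b_\dagger$-GT bases of the various $V^{\mu_k}$ govern the rest. By Proposition~\ref{prop:GY}, the filtration $V^\la_{a_1} \subset \cdots \subset V^\la_{a_r} = V^\la$ is $S_{n-1}$-stable and $f_k : V^\la_{a_k}/V^\la_{a_{k-1}} \xrightarrow{\sim} V^{\mu_k}$ sends $c_T \mapsto c_{d(T)}$. Fix $T$ with $\varepsilon_n(T) = a = a_k$. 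In the quotient $V^\la_{a_k}/V^\la_{a_{k-1}}$, the image of $c_T$ is $c_{d(T)} \in V^{\mu_k}$, and the image of any $b$-GT vector $v^b_S$ with $\varepsilon_n(S) = a_k$ is (up to nonzero scalar) a $b_\dagger$-GT vector of $V^{\mu_k}$ of type $d(S)$ — because the isotypic conditions for $G_3 \supset \cdots$ transfer through the isomorphism $f_k$. Applying the inductive hypothesis to $c_{d(T)}$ in $V^{\mu_k}$ with the sequence $b_\dagger$ gives $c_{d(T)} = a'_{d(T)} v^{b_\dagger}_{\phi_{\mu,b_\dagger}(d(T))} + \sum_{S' <_{b_\dagger} d(T)} a'_{S'} v^{b_\dagger}_{\phi_{\mu,b_\dagger}(S')}$ with $a'_{d(T)} \ne 0$. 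Lifting back: every $S'$ appearing is $d(S)$ for a unique $S$ with $\varepsilon_n(S) = a_k$ and $\phi_{\mu,b_\dagger}(S')^+ = \phi_{\la,b}(S)$ by definition of $\phi_b$, and $d(S) <_{\mu,b_\dagger} d(T)$ is exactly $S <_{\la,b} T$ in the tie-case $\varepsilon_n(S) = \varepsilon_n(T)$. So $c_T \equiv a'_{d(T)} v^b_{\phi_b(T)} + (\text{lower in } <_b \text{ among } \varepsilon_n = a_k) \pmod{V^\la_{a_{k-1}}}$. Finally, $V^\la_{a_{k-1}} = \Span\{c_S : \varepsilon_n(S) < a_k\}$, and by induction on $k$ (or directly) each such $c_S$ already expands in the $v^b_{\phi_b(\cdot)}$ with all terms having $\varepsilon_n < a_k$, hence $<_b$-smaller than $T$. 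This yields \eqref{eq:Th} with $a_T = a'_{d(T)} \ne 0$.

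\textbf{Case $b_1 = 1$.} Apply $w_0$ to reduce to the previous case. Write $\ol b = 0 \cdot \ol{b_\dagger}$, so $\ol b_1 = 0$. By Lemma~\ref{lem:GTcompat}, $w_0 \cdot v^{\ol b}_S = v^b_S$ (up to scalar), and by Theorem~\ref{thm:MBS}, $w_0 \cdot c_T = \pm c_{\ol T}$ with sign depending only on $\la$. Starting from the $b_1 = 0$ expansion of $c_{\ol T}$ with respect to $\ol b$ — namely $c_{\ol T} = a'\, v^{\ol b}_{\phi_{\ol b}(\ol T)} + \sum_{S <_{\ol b} \ol T} a'_S v^{\ol b}_{\phi_{\ol b}(S)}$ — and hitting both sides with $w_0$, I get $\pm c_T = a'\, v^b_{\phi_{\ol b}(\ol T)} + \sum_{S <_{\ol b} \ol T} a'_S v^b_{\phi_{\ol b}(S)}$. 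It then suffices to check the bookkeeping: $\phi_{\ol b}(\ol T) = \phi_{b}(T)$ and the substitution $S \mapsto \ol S$ converts $\{S <_{\ol b} \ol T\}$ into $\{S' <_b T\}$. Both follow by unwinding Definition~\ref{def:bij-and-ord}: for $\ol b_1 = 0$ one has $\phi_{\ol b}(\ol T) = \phi_{\ol{b_\dagger}}(d(\ol T))^+$, which is precisely $\phi_b(T)$ when $b_1 = 1$; and $S <_{\ol b} \ol T$ means $\varepsilon_n(S) < \varepsilon_n(\ol T)$ or (equality and) $d(S) <_{\ol{b_\dagger}} d(\ol T)$, which upon writing $S = \ol{S'}$ becomes $\varepsilon_n(\ol{S'}) < \varepsilon_n(\ol T)$ or $d(\ol{S'}) <_{\ol{b_\dagger}} d(\ol T)$, i.e. $S' <_b T$. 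Absorbing the sign $\pm$ and the scalars into the $a$'s completes part~(1).

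\textbf{Part (2), integrality.} The KL basis $\{c_T\}$ is defined over $\ZZ$ (indeed over $\ZZ[q,q^{-1}]$, specialising at $q=1$), and Young's natural/seminormal basis, as well as all generalised GT bases, can be chosen so that change-of-basis matrices between them and the natural basis are rational; the claim is that one can in fact normalise the $b$-GT basis so that $c_T$ has integer coordinates. I would argue this by tracking the induction: in the $b_1 = 0$ step, the isomorphism $f_k$ is defined over $\ZZ$ (it sends $c_T \mapsto c_{d(T)}$), so if the $b_\dagger$-GT bases of the $V^{\mu_k}$ are $\ZZ$-normalised and the expansions there have integer coefficients, then lifting produces integer coefficients on $V^\la_{a_k}/V^\la_{a_{k-1}}$; patching the filtration steps is an integer triangular process. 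For the $b_1 = 1$ step, $w_0$ acts on the $\ZZ$-lattice $\bigoplus \ZZ c_T$ (it permutes the $c_T$ up to sign by Theorem~\ref{thm:MBS}), so defining $v^b_S := w_0 \cdot v^{\ol b}_S$ keeps everything integral. The one genuine subtlety — and the step I expect to be the main obstacle — is making the normalisation \emph{consistent} across the recursion: $\phi_b$ and $d(\cdot)^+$ must be compatible with a single choice of $\ZZ$-form on each $V^\la$, and one must check that the "up to scalar" in Lemma~\ref{lem:GTcompat} and in the definition of the $b$-GT basis can always be pinned down to $\pm 1$ simultaneously with the integrality of the $c_T$-expansion; equivalently, that the leading coefficient $a_T$ is a unit in $\ZZ$, i.e. $\pm 1$. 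I would handle this by fixing, once and for all, an integral KL lattice and defining every $b$-GT vector as an explicit integer combination of $c_T$'s via the inductive recipe above, so that integrality is automatic and the only thing to verify is that the recipe indeed produces a basis (nonvanishing of leading coefficients), which is already part~(1).
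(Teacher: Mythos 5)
Your proof is correct and its overall architecture matches the paper exactly for Case~1 ($b_1=0$): induction on $n$, use Proposition~\ref{prop:GY} to pass to the associated graded of the filtration $V_{a_1}^\la \subset \cdots \subset V_{a_r}^\la$, identify the $b$-GT vectors of $V_k$ with $b_\dagger$-GT vectors of $V^{\mu_k}$ under $f_k$, apply the inductive hypothesis, lift, and induct on $k$ to absorb the $V_{a_{k-1}}^\la$ error term. Where you genuinely diverge from the paper is in Case~2 ($b_1=1$). The paper constructs an entirely parallel apparatus: the $w_0$-twisted subspaces $V_{\ol a_k}^\la$, the twisted maps $f_{\ol k}$, a commutative-diagram lemma verifying these are $S_{n-1}$-isomorphisms, a second compatibility lemma, and then a second induction on $k$. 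You instead take the already-established $b_1=0$ expansion of $c_{\ol T}$ with respect to $\ol b = 0\cdot\ol{b_\dagger}$, hit it once with $w_0$ (using Theorem~\ref{thm:MBS} to send $c_{\ol T}\mapsto \pm c_T$ and Lemma~\ref{lem:GTcompat} to send $v^{\ol b}_\ast \mapsto v^b_\ast$), and unwind Definition~\ref{def:bij-and-ord} under $S\mapsto \ol S$ to match the claimed indexing, checking that $\phi_{\ol b}(\ol T)=\phi_b(T)$ and that $S <_{\ol b}\ol T \Leftrightarrow \ol S <_b T$ via $(\ol b)_\dagger = \ol{b_\dagger}$. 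This is a real shortcut: it makes the paper's Lemmas~\ref{commdi} and~\ref{lem:f_kbar} (and the twisted filtration itself) unnecessary. For Part~(2), you are overcomplicating matters. The paper's observation is the elementary one: given any rational change-of-basis matrix from the $v^b$-basis to the $c$-basis, rescaling each $v^b_{\phi_b(S)}$ by the common denominator of the corresponding row renders the matrix integral. Your instinct to build a $\ZZ$-lattice recursively through the induction would also work, and you correctly flag that the main thing to watch is consistency of the scalar normalizations across the recursion, but it is significantly more labour than the problem requires; the ``up to scalar'' issues you worry about dissolve once one accepts that integrality (as opposed to integral \emph{uni}triangularity) is all that is claimed.
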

\begin{Remark}
The coefficients $a_T,a_S$ that appear in \eqref{eq:Th} depend on $b$. This is perhaps not clear from our notation, which we chose to declutter the formula. Note, the set of $b$-GT bases can be normalised such that the coefficients $a_T,a_S$ are minimally integral in the following sense: $\{a_S \mid S \leq_{b} T\}$ is integral and at least one pair of elements are coprime. In this case, the leading term coefficient $a_T$ is independent (up to sign) of $b \in \BB_{n-2}$. We are not aware of an interesting (geometric) interpretation of these coefficients.
\end{Remark}
\begin{Remark}
If $b=0\cdots0$, i.e. we consider  the standard GT basis, then this result can be deduced from  \cite{GM88, AH21}. Indeed, Garsia and MacLarnan prove that the KL basis is integral and unitriangular with respect to Young's natural basis. The latter is not the standard GT basis, but Armon and Halverson prove that it is upper triangular with respect to the seminormal basis, which is the standard GT basis. Moreover, the relevant ordering are compatible, from which the desired result follows. From this argument we lose integral unitriangularity, and indeed in general we cannot impose that the leading coefficient $a_T=1$ and the other coefficients are integral in \eqref{eq:Th}.
\end{Remark}

\begin{Example}\label{example}
    Consider the first non-trivial case: $\la=(2,1)\vdash 3$. We realise $V^\la$ as the subspace in $\CC^3$ of vectors whose entries sum to zero. Set $S={\scriptsize\young(13,2)}$ and $T={\scriptsize\young(12,3)}$. Then $c_S=(1,-1,0)$ and $c_T=(0,1,-1)$.

    If $b=0$ then $\phi_0=id$ and $S<_0T$.
    The $0$-GT basis is $v_S^0=\frac{1}{2}c_S$ and $v_T^0=\frac{1}{2}(1,1,-2)$ (we choose the normalisation so that the change of basis matrix is integral). The upper-triangularity result is simply 
    \begin{align*}
        c_S &= 2v_S^0, \\
        c_T &= v_T^0-v_S^0
    \end{align*}
    
    On the other hand, if $b=1$ then $\phi_1$ interchanges $S$ and $T$ and $T <_1 S$. The $1$-GT basis is $v_S^1=\frac{1}{2}(0,1,-1)$ and $v_T^1=\frac{1}{2}(2,-1,-1)$. 
    The upper-triangularity in this case is
    \begin{align*}
        c_T &= 2v_{\phi_1(T)}^1, \\
        c_S &= v_{\phi_1(S)}^1-v_{\phi_1(T)}^1
    \end{align*}
    Note here that (up to integral scalar) the Kazhdan-Lusztig basis is the unique basis which is integrally upper triangular with respect to both Gelfand-Tsetlin bases. It's an interesting (and open) question to what extent this holds in general.
\end{Example}

\section{Proof of Theorem \ref{thm:main}}
The remainder of the paper is devoted to the proof of Theorem \ref{thm:main}.
Let $b=b_1\cdots b_{n-2} \in \BB_{n-2}$ label the multiplicity-free chain $(G_1,\hdots,G_{n-1})$.
First note that the second part of the theorem is a trivial consequence of the first: if two bases are related by a rational matrix, then one can rationally rescale one of the bases so that the matrix is integral.
We now proceed to prove Theorem \ref{thm:main}(1)
by induction on $n \geq 2$.

If $n=2$ the result is trivial, 
and we normalise the GT basis in this case to equal the KL basis.
Suppose now that $n>2$ and the result holds for $S_{n-1}$.
Let $\la \vdash n$ and suppose $\la$ has $r$ removable boxes, which appear in rows $1\leq a_1 <a_2 \cdots < a_r$. Let $\mu_k \vdash n-1$ be the partition obtained by removing the box in row $a_k$.
Then, for any $1\leq k \leq r$ and $T \in \SYT(\mu_k)$, by induction we have an equality in $V^{\mu_k}$:
\begin{align}\label{eq:basecase}
c_{d(T)} = a_{d(T)}v_{\phi_{\mu_k,b_\dagger}(d(T))}^{b_\dagger} 
+ 
\sum_{\substack{S \in \SYT(\mu_k) \\ S <_{\mu_k,b_\dagger} d(T)}}a_Sv_{\phi_{\mu_k,b_\dagger}(S)}^{b_\dagger},
\end{align}
with rational coefficients.

Recall from Proposition \ref{prop:GY} we have a filtration
\[
0 \subset V^\la_{a_1} \subset \cdots \subset V^\la_{a_r}=V^\la,
\]
and an isomorphism of $S_{n-1}$-modules $f_k:V^\la_{a_k}/V^\la_{a_{k-1}} \to V^{\mu_k}$  given by 
$c_T+ V^\la_{a_{k-1}} \longmapsto c_{d(T)}$. Therefore, in the unique decomposition $V^\la=V_1\oplus\cdots\oplus V_r$, where $V_r \cong V^{\mu_k}$ as $S_{n-1}$-modules, $V^\la_{a_k}=V_1\oplus\cdots\oplus V_k$. 
The induction now breaks into two cases depending on the value of $b_1$. 
\subsection{Case 1} We first consider the case $b_{1}=0$.

\begin{Lemma}\label{lem:f_k}
    For any $T \in \SYT(\la)$, $f_k(v_T^b+ V^\la_{a_{k-1}})= v_{d(T)}^{b_\dagger}$ (up to nonzero scalar).
\end{Lemma}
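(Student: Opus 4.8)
The plan is to unwind both sides of the claimed identity $f_k(v_T^b + V^\la_{a_{k-1}}) = v_{d(T)}^{b_\dagger}$ by characterizing the right-hand side via the isotypic containments that define a $b_\dagger$-GT vector, and then checking that $f_k$ transports the corresponding containments for $v_T^b$ to exactly those. Concretely, write $b_\dagger$ as the label of the multiplicity-free chain $(H_1,\hdots,H_{n-2})$ of $S_{n-1}$. Since $b_1 = 0$, the chain $(G_1,\hdots,G_{n-1})$ of $S_n$ satisfies $G_2 = S_{[1,n-1]}$ (the first step removes the largest element, i.e. $n$), and moreover $G_{m+1} = H_m$ for $1 \le m \le n-2$ under the identification $G_2 = S_{n-1}$. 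Let $(\mu^1,\hdots,\mu^n)$ be the sequence of partitions associated to $T \in \SYT(\la)$; then $\mu^1 = \la$, and by Definition \ref{def:genGTbasis} the vector $v_T^b$ lies in $\Iso_{\mu^m}(\Res^{S_n}_{G_m}(V^\la))$ for all $m$. The tableau $d(T)$ has associated sequence $(\mu^2,\hdots,\mu^n)$, so $v_{d(T)}^{b_\dagger}$ is characterized (up to scalar) by lying in $\Iso_{\mu^{m+1}}(\Res^{S_{n-1}}_{H_m}(V^{\mu_k}))$ for $1 \le m \le n-2$.

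The key step is then: $v_T^b$ of type $T$ with $\varepsilon_n(T) = a_k$ lies in $V^\la_{a_k}$ (since $\varepsilon_n(T) \le a_k$) but not in $V^\la_{a_{k-1}}$. To see the latter, note $V^\la_{a_{k-1}} = \Span\{c_S : \varepsilon_n(S) \le a_{k-1}\}$ is $S_{n-1}$-stable and, by Proposition \ref{prop:GY}, its $S_{n-1}$-module structure is $\bigoplus_{j < k} V^{\mu_j}$; on the other hand $v_T^b$ generates (under $G_2 = S_{n-1}$) an irreducible of type $\mu^2 = \sh(d(T)) = \mu_k$, which does not appear in $V^\la_{a_{k-1}}$ because the $\mu_j$ are pairwise distinct. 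Hence $v_T^b + V^\la_{a_{k-1}}$ is a nonzero vector in $V^\la_{a_k}/V^\la_{a_{k-1}}$, and $f_k$ being an $S_{n-1}$-isomorphism sends it to a nonzero vector of $V^{\mu_k}$. It remains to check this image satisfies the defining isotypic containments of $v_{d(T)}^{b_\dagger}$: for each $m$ with $1 \le m \le n-2$, the vector $v_T^b$ lies in $\Iso_{\mu^{m+1}}(\Res^{S_n}_{G_{m+1}}(V^\la))$, and since $f_k$ is an isomorphism of $S_{n-1}$-modules and $H_m = G_{m+1} \le S_{n-1}$, it sends this into $\Iso_{\mu^{m+1}}(\Res^{S_{n-1}}_{H_m}(V^{\mu_k}))$; intersecting over all $m$ and using multiplicity-freeness pins the image down to the line spanned by $v_{d(T)}^{b_\dagger}$. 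Combined with nonvanishing, this gives $f_k(v_T^b + V^\la_{a_{k-1}}) = v_{d(T)}^{b_\dagger}$ up to nonzero scalar.

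The main obstacle I anticipate is the bookkeeping around the identification $G_{m+1} = H_m$ and making sure the filtration-theoretic argument (that $v_T^b \notin V^\la_{a_{k-1}}$) is airtight — in particular, that $v_T^b$ really does generate an $S_{n-1}$-subrepresentation isomorphic to $V^{\mu_k}$, which follows from $v_T^b \in \Iso_{\mu^2}(\Res^{S_n}_{G_2}(V^\la))$ with $\mu^2 = \mu_k$ and the fact that a GT vector spans a one-dimensional weight space but generates the full isotypic line's irreducible under $S_{n-1}$; one should be slightly careful that ``$v_T^b$ of type $T$'' forces $\varepsilon_n(T) = a_k$ exactly, i.e. the second partition in the chain is $\mu_k$, so that $f_k$ (rather than some $f_j$) is the relevant map. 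Everything else is a routine consequence of multiplicity-freeness of restrictions along the chain and the fact that isotypic projections commute with module homomorphisms.
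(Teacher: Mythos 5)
Your proof is correct and takes essentially the same route as the paper: identify $v_T^b$ (via its defining isotypic containments along $G_2 \supset \cdots \supset G_{n-1}$, a multiplicity-free chain of $S_{n-1}$ labelled by $b_\dagger$) as a $b_\dagger$-GT vector of type $d(T)$ living in the summand $V_k \cong V^\la_{a_k}/V^\la_{a_{k-1}}$, and then use that $f_k$ is an $S_{n-1}$-isomorphism to transport these containments into $V^{\mu_k}$, pinning the image to the line of $v_{d(T)}^{b_\dagger}$ by uniqueness of GT bases. The paper states this more compactly (noting directly that $\{v_T^b : T \in \SYT_{a_k}(\la)\}$ is a $b_\dagger$-GT basis of $V_k$ of the right types, so $f_k$ matches it to the GT basis of $V^{\mu_k}$), but your argument, including the nonvanishing check via $v_T^b \in V_k$ and $V_k \cap V^\la_{a_{k-1}} = 0$, is the same in substance.
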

\begin{proof}
    Suppose $T \in \SYT_{a_k}(\la)$. Then, by definition,
    $v_T^b \in V_k$, and for $1 < m < n-1$ there exist $\la^m \vdash n-m+1$ such that 
    \[
v_T^b \in \Iso_{\la^m}\Res^{S_{n-1}}_{G_m}(V_k).
    \]
    This shows that $\{v_T^b\mid T\in \SYT_{a_k}(\la)\}$ is a $b_\dagger$-GT basis of $V_k$, and moreover $v_T^b$ is of type $d(T)$. Since $f_k$ is an isomorphism of $S_{n-1}$ modules, the result follows.
\end{proof}

We normalize the $b$-GT basis of $V^\la$ so that Lemma \ref{lem:f_k} is an equality on the nose. 
Now we induct on $1\leq k \leq r$. Let $k=1$.  
Applying $f_1^{-1}$ to \eqref{eq:basecase} we obtain
\[
c_{T} = a_{d(T)}v_{\phi_{\mu_1,b_\dagger}(d(T))^+}^{b} 
+ 
\sum_{\substack{S \in \SYT(\mu_1) \\ S <_{\mu_1,b_\dagger} d(T)}}a_Sv_{\phi_{\mu_1,b_\dagger}(S)^+}^{b}.
\]
By Definition \ref{def:bij-and-ord}, and setting $a_P:=a_{d(P)}$ for all $P\in \SYT_{a_1}(\la)$, the right hand side can be rewritten as
\[
a_{T}v_{\phi_{\la,b}(T)}^{b} \,
+ 
\sum_{\substack{Q \in \SYT_{a_1}(\la) \\ Q <_{\la,b} T}}a_Qv_{\phi_{\la,b}(Q)}^{b}.
\]
This completes the base case of the induction on $k$.

Now let $k>1$. Applying $f_k^{-1}$ to \eqref{eq:basecase}, and using Definition \ref{def:bij-and-ord} as in the base case, we obtain a congruence:
\[
c_{T} \;\equiv\; a_{T}v_{\phi_{\la,b}(T)}^{b} 
+ 
\sum_{\substack{Q \in \SYT_{a_k}(\la) \\ Q <_{\la,b} T}}a_Qv_{\phi_{\la,b}(Q)}^{b} \;\;\Big(\text{modulo } V_{a_{k-1}}^\la\Big).
\]
Therefore, there exists some $x\in V_{a_{k-1}}^\la$ such that 
\[
c_{T} = a_{T}v_{\phi_{\la,b}(T)}^{b} 
+ 
\sum_{\substack{Q \in \SYT_{a_k}(\la) \\ Q <_{\la,b} T}}a_Qv_{\phi_{\la,b}(Q)}^{b} + x.
\]
Now, $x = \sum_{R}a_Rc_R$, the sum ranging over $R \in \SYT(\la)$ such that $\varepsilon_n(R) \leq a_{k-1}.$
By induction (on $k$), Theorem \ref{thm:main} holds for $c_R$. Since $R <_{\la,b} T$ this completes the induction in this case.

\subsection{Case 2} We now consider the case  $b_{1}=1$.
The proof here follows a similar strategy, but requires some subtle changes. For any $k \in \NN$ define,
\begin{align*}
V_{\ol{k}}^\la &= \Span\{c_T \mid T \in \SYT(\la),\; \varepsilon_n(\ol{T})\leq k \}, \\
f_{\ol{k}}&:V_{\ol{a}_k}^\la/V_{\ol{a}_{k-1}}^\la \to V^{\mu_k}, \; c_T+V_{\ol{a}_{k-1}}^\la \longmapsto \pm c_{d(\ol{T})}.
\end{align*}
where the sign in the definition of $f_{\ol{k}}$ agrees with the one appearing in $w_0\cdot c_T = \pm c_{\ol{T}}$ (cf. Theorem \ref{thm:MBS}), and in particular it only depends on $\la$. 
First note:
\begin{Lemma}
    For any $a$, $w_0(V_a^\la)=V_{\ol{a}}^\la$, and hence $V_{\ol{a}}^\la$ is $\ol{S_{n-1}}$-invariant. 
\end{Lemma}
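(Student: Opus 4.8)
The plan is to bootstrap both assertions from Theorem \ref{thm:MBS} together with the $S_{n-1}$-invariance supplied by Proposition \ref{prop:GY}; no new ideas are needed, only careful reindexing.

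First I would establish $w_0(V_a^\la)=V_{\ol{a}}^\la$. Since $V_a^\la=\Span\{c_T \mid \varepsilon_n(T)\leq a\}$, applying $w_0$ and invoking Theorem \ref{thm:MBS} (which gives $w_0\cdot c_T=\pm c_{\ol T}$ with a $\la$-dependent, hence $T$-independent, sign) yields $w_0(V_a^\la)=\Span\{c_{\ol T}\mid \varepsilon_n(T)\leq a\}$. Now I would reindex the spanning set: because $\ev$ is an involution on $\SYT(\la)$, substituting $S=\ol T$ (so $T=\ol S$ and $\varepsilon_n(T)=\varepsilon_n(\ol S)$) rewrites this span as $\Span\{c_S\mid \varepsilon_n(\ol S)\leq a\}$, which is exactly $V_{\ol{a}}^\la$ by definition.

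For the invariance claim, I would first note that $\ol{S_{n-1}}=w_0 S_{n-1}w_0=S_{[2,n]}$, using that $w_0$ is an involution and $w_0([1,n-1])=[2,n]$. Then, given $g\in\ol{S_{n-1}}$ and $v\in V_{\ol{a}}^\la$, I would write $g=w_0hw_0$ with $h\in S_{n-1}$ and, using the first part, $v=w_0u$ with $u\in V_a^\la$; then $gv=w_0hu$, and since $V_a^\la$ is $S_{n-1}$-invariant by Proposition \ref{prop:GY}, $hu\in V_a^\la$, so $gv\in w_0(V_a^\la)=V_{\ol{a}}^\la$.

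I do not anticipate a real obstacle here; the only point requiring a moment's care is that the reindexing $T\mapsto\ol T$ is a bijection on the relevant spanning set (it is, since $\ev^2=\id$) and that the signs coming from Theorem \ref{thm:MBS} are irrelevant to the span.
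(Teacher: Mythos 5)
Your proof is correct and follows exactly the same route as the paper: part one unwinds the span of $V_a^\la$ under $w_0$ using Theorem~\ref{thm:MBS} and the involutivity of evacuation, and part two transfers $S_{n-1}$-invariance of $V_a^\la$ (Proposition~\ref{prop:GY}) through conjugation by $w_0$. The paper states this more tersely, but the argument is identical.
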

\begin{proof}
    The first statement is an immediate consequence of that fact that evacuation is an involution and Theorem \ref{thm:MBS}. The second now follows since $V_a^\la$ is $S_{n-1}$-invariant.
\end{proof}
By this lemma we can regard $V_{\ol{a}}^\la$ as and $S_{n-1}$-module by twisting the action with $w_0$, i.e. by transferring the structure via the isomorphism $w_0:V_{a}^\la \to V_{\ol{a}}^\la$.

\begin{Lemma}\label{commdi}
    The map $f_{\ol{k}}$ is an isomorphism of $S_{n-1}$-modules.
\end{Lemma}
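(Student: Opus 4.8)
The plan is to transport the isomorphism $f_k$ of Proposition \ref{prop:GY} across the action of the longest element. Concretely, I would prove the stronger statement that
$f_{\ol k}=f_k\circ\bar w_0^{-1}$,
where $\bar w_0\colon V_{a_k}^\la/V_{a_{k-1}}^\la\to V_{\ol a_k}^\la/V_{\ol a_{k-1}}^\la$ is the linear isomorphism induced by multiplication by $w_0$. This descends to the quotients because $w_0(V_a^\la)=V_{\ol a}^\la$ for every $a$ by the preceding lemma, which also shows $w_0$ carries $V_{a_{k-1}}^\la$ onto $V_{\ol a_{k-1}}^\la$ and $V_{a_k}^\la$ onto $V_{\ol a_k}^\la$.

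First I would record that $\bar w_0$ is an isomorphism of $S_{n-1}$-modules, the source carrying its natural $S_{n-1}$-action and the target the $w_0$-twisted action $\sigma\cdot_{\mathrm{tw}}u=(w_0\sigma w_0)\,u$ introduced just above the lemma. This is the one-line check that for $\sigma\in S_{n-1}$ and $v\in V_{a_k}^\la$ one has $w_0(\sigma v)=(w_0\sigma w_0)(w_0 v)=\sigma\cdot_{\mathrm{tw}}(w_0 v)$, which passes to the quotient. So $\bar w_0$ is an $S_{n-1}$-module isomorphism; since $f_k$ is one by Proposition \ref{prop:GY}, it will suffice to identify $f_{\ol k}$ with $f_k\circ\bar w_0^{-1}$.

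Next I would check that the square with top edge $f_k$, bottom edge $f_{\ol k}$, left edge $\bar w_0$ and right edge $\id_{V^{\mu_k}}$ commutes. As all four maps are linear, it is enough to test this on the basis $\{\,c_T+V_{a_{k-1}}^\la : \varepsilon_n(T)=a_k\,\}$ of $V_{a_k}^\la/V_{a_{k-1}}^\la$. For such $T$, Proposition \ref{prop:GY} gives $f_k(c_T+V_{a_{k-1}}^\la)=c_{d(T)}$. On the other hand, writing $\epsilon_\la=\pm1$ for the sign of Theorem \ref{thm:MBS} (which depends only on $\la$), we have $w_0\cdot c_T=\epsilon_\la c_{\ol T}$, hence $\bar w_0(c_T+V_{a_{k-1}}^\la)=\epsilon_\la c_{\ol T}+V_{\ol a_{k-1}}^\la$; since $\varepsilon_n(\ol{\ol T})=\varepsilon_n(T)=a_k$, this lies in the top graded piece and $f_{\ol k}$ sends it to $\epsilon_\la\cdot\epsilon_\la\,c_{d(\ol{\ol T})}=c_{d(T)}$, using that evacuation is an involution and that the sign built into $f_{\ol k}$ is again $\epsilon_\la$. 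Thus $f_{\ol k}\circ\bar w_0=f_k$, so $f_{\ol k}=f_k\circ\bar w_0^{-1}$ is a composite of $S_{n-1}$-module isomorphisms, hence itself one.

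The only mildly delicate point — and the one place requiring care — is the sign bookkeeping: one needs the sign occurring in $w_0\cdot c_T=\pm c_{\ol T}$ and the sign built into the definition of $f_{\ol k}$ to be the \emph{same} $\epsilon_\la$, so that they cancel and the square commutes on the nose; this is precisely what Theorem \ref{thm:MBS} guarantees. Everything else is routine, and there is no genuine obstacle beyond keeping straight which $S_{n-1}$-structure (natural versus $w_0$-twisted) sits on which space.
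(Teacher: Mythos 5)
Your proof is correct and follows essentially the same route as the paper: the paper also factors $f_{\ol k}$ through the quotient map induced by $w_0$ and then $f_k$, checking commutativity on the KL basis via Theorem \ref{thm:MBS}. The only cosmetic difference is that you orient the $w_0$ arrow from the untwisted quotient to the twisted one (so the two copies of $\epsilon_\la$ appear and cancel explicitly), whereas the paper orients it the other way (so the $\pm$ from $w_0\cdot c_T$ simply reproduces the $\pm$ built into $f_{\ol k}$ with no cancellation needed); you also spell out the one-line verification that $\bar w_0$ intertwines the natural and $w_0$-twisted $S_{n-1}$-actions, which the paper leaves implicit in its remark that $f_{\ol k}$ is a composite of $S_{n-1}$-module isomorphisms.
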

\begin{proof}
Consider the diagram
\[
\begin{tikzcd}
    V_{\ol{a}_k}^\la/V_{\ol{a}_{k-1}}^\la \arrow[r, "w_0"] \arrow[dr, "f_{\ol{k}}"'] & V_{a_k}^\la/V_{a_{k-1}}^\la \arrow[d, "f_k"'] \\
    & V^{\mu_k}
\end{tikzcd}
\]
We claim this commutes. Indeed, following the diagram right and down we get:
\begin{align*}
    c_T+V_{\ol{a}_{k-1}}^\la &\mapsto w_0(c_T)+ V_{a_{k-1}}^\la \\
     & = \pm c_{\ol{T}} + V_{a_{k-1}}^\la \\
     &\mapsto \pm c_{d(\ol{T})},
\end{align*}
agreeing with the value of $c_T+V_{\ol{a}_{k-1}}^\la$ under $f_{\ol{k}}$.
Therefore $f_{\ol{k}}$ is a composition of $S_{n-1}$-module isomorphisms.
\end{proof}

\begin{Lemma}\label{lem:f_kbar}
    For any $T \in \SYT(\la)$, $f_{\ol{k}}(v_T^b+ V^\la_{\ol{a}_{k-1}})= \pm v_{d(T)}^{\ol{b}_\dagger}$ (up to nonzero scalar).
\end{Lemma}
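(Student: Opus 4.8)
The plan is to deduce this from the Case 1 statement, Lemma \ref{lem:f_k}, by transporting everything through conjugation by the longest element $w_0$. The key observation is that since $b_1=1$, the complementary label $\ol b$ has $\ol b_1=0$, so the Case 1 analysis applies to the chain labelled $\ol b$; meanwhile $w_0$ carries the $\ol b$-GT basis of $V^\la$ to the $b$-GT basis (Lemma \ref{lem:GTcompat}) and carries the filtration $V^\la_{\ol a_\bullet}$ onto $V^\la_{a_\bullet}$. So Case 2 ought to be nothing more than the $w_0$-translate of Case 1.

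Concretely, I would fix $T$ with $\varepsilon_n(T)=a_k$, i.e. $T\in\SYT_{a_k}(\la)$. By Lemma \ref{lem:GTcompat} we have $w_0\cdot v_T^b=v_T^{\ol b}$ up to a nonzero scalar, and by the Case 1 analysis applied to the chain $\ol b$ the vector $v_T^{\ol b}$ lies in the $k$-th graded piece of the $a$-filtration (this uses $\varepsilon_n(T)=a_k$); hence $v_T^b=w_0\cdot v_T^{\ol b}$ lies in the $k$-th graded piece of $w_0 V^\la_{a_k}=V^\la_{\ol a_k}$, so that $v_T^b+V^\la_{\ol a_{k-1}}$ is a nonzero element of the domain of $f_{\ol k}$. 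I would then invoke the commutative triangle from the proof of Lemma \ref{commdi}, namely $f_{\ol k}=f_k\circ w_0$; combined with $w_0 V^\la_{\ol a_{k-1}}=V^\la_{a_{k-1}}$ this gives
\[
f_{\ol k}\big(v_T^b+V^\la_{\ol a_{k-1}}\big)=f_k\big(w_0\cdot v_T^b+V^\la_{a_{k-1}}\big)=f_k\big(v_T^{\ol b}+V^\la_{a_{k-1}}\big)
\]
up to a nonzero scalar.

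Since $\ol b_1=0$, Lemma \ref{lem:f_k} applies to the chain $\ol b$, and as $\varepsilon_n(T)=a_k$ it yields $f_k(v_T^{\ol b}+V^\la_{a_{k-1}})=v_{d(T)}^{(\ol b)_\dagger}$ up to a nonzero scalar. Finally $(\ol b)_\dagger=\ol{b_2}\cdots\ol{b_{n-2}}=\overline{b_\dagger}=\ol b_\dagger$, so chaining the above equalities gives $f_{\ol k}(v_T^b+V^\la_{\ol a_{k-1}})=\pm v_{d(T)}^{\ol b_\dagger}$ up to a nonzero scalar, which is the assertion; the sign depends only on $\la$, being the one in the definition of $f_{\ol k}$. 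Note that it is legitimate to quote Lemma \ref{lem:f_k} for the chain $\ol b$ even though we are formally inside Case 2, because that lemma and its proof use only that the label's first letter is $0$ (so that the second group of the chain is $S_{[1,n-1]}$) together with Proposition \ref{prop:GY}, and never the inductive hypothesis on $n$.

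I do not expect a real obstacle here: once the $w_0$-reduction is in place the rest is bookkeeping. The one point requiring care is the hypothesis on $T$ — it must be $\varepsilon_n(T)=a_k$ and not $\varepsilon_n(\ol T)=a_k$, even though $V^\la_{\ol a_\bullet}$ is defined using $\varepsilon_n$ of the evacuation — the reason being that $w_0$ sends the $\ol b$-GT vector $v_T^{\ol b}$, which sits in the isotypic piece of $\Res V^\la$ indexed by $\sh(d(T))$, to $v_T^b$.
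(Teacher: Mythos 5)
Your proposal is correct and follows essentially the same route as the paper: apply Lemma \ref{lem:GTcompat} to replace $v_T^b$ by $w_0\cdot v_T^{\ol b}$, use the commutativity $f_{\ol k}=f_k\circ w_0$ established in Lemma \ref{commdi}, and then invoke Lemma \ref{lem:f_k} for the chain $\ol b$ (which has $\ol b_1=0$ since $b_1=1$) together with the identity $(\ol b)_\dagger=\ol{b_\dagger}$. Your two side remarks — that Lemma \ref{lem:f_k} is applicable to $\ol b$ without circularity because its proof uses only Proposition \ref{prop:GY} and the hypothesis that the first letter of the label is $0$, and that the natural hypothesis on $T$ is $\varepsilon_n(T)=a_k$ rather than $\varepsilon_n(\ol T)=a_k$ because $v_T^b$ sits in the $\sh(d(T))$-isotypic piece of $\Res_{\ol{S_{n-1}}}V^\la$ — are both sound clarifications of points the paper leaves implicit.
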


\begin{proof}
By Lemma \ref{lem:GTcompat}, $v_T^b=w_0\cdot v_T^{\ol{b}}$. Hence 
\begin{align*}
    f_{\ol{k}}(v_T^b+V_{\ol{a}_{k-1}}^\la) &= f_{\ol{k}}(w_0\cdot v_T^{\ol{b}}+V_{\ol{a}_{k-1}}^\la) \\
    &=  f_{k}(v_T^{\ol{b}}+V_{a_{k-1}}^\la) \\
    &=  v_{d(T)}^{\ol{b}_\dagger}
\end{align*}
Note the second equality uses the commutative diagram in Lemma \ref{commdi} and the third equality uses Lemma \ref{lem:f_k}.
\end{proof}

To continue the proof, now we induct on $1 \leq k \leq r$ in the filtration:
\[
0 \subset V^\la_{\ol{a}_1} \subset \cdots \subset V^\la_{\ol{a}_r}=V^\la.
\]
Let $k=1$.  
Consider \eqref{eq:basecase} with $T$ replaced by $\ol{T}$, and $b$ by $\ol{b}$. Applying $f_{\ol{1}}^{-1}$ to this, we obtain
\[
c_{T} = \pm a_{d(T)}v_{\phi_{\mu_1,\ol{b}_\dagger}(d(\ol{T}))^+}^{b} 
+ 
\sum_{\substack{S \in \SYT(\mu_1) \\ S <_{\mu_1,\ol{b}_\dagger} d(\ol{T})}}a_Sv_{\phi_{\mu_1,\ol{b}_\dagger}(S)^+}^{b}.
\]
As in the previous case, the right hand side can be rewritten as
\[
a_{T}v_{\phi_{\la,b}(T)}^{b} 
+ 
\sum_{\substack{Q \in \SYT_{\ol{a}_1}(\la) \\ Q <_{\la,b} T}}a_Qv_{\phi_{\la,b}(Q)}^{b},
\]
proving the base case of the induction on $k$. One can similarly complete the inductive step as in the previous case, and thus the proof of part (1) of Theorem \ref{thm:main} is done.

%% note that you DO NOT have to put your abstract here -- it is generated by \maketitle and the \abstract and \resume commands above

%\section{A section}

%\lipsum[3]

%\begin{equation}\label{eqn:eq1}
%\int_0^\infty \exp(-x) \, \mathrm{d}x = 1.
%\end{equation}

%\begin{lem}\label{lem:lem1}
%\lipsum[11]
%\end{lem}

%\lipsum[10]

%\begin{figure}
%\centering
%\includegraphics[width=0.5\textwidth]{plot}
%\caption{A plot of a function.}
%\label{fig:plot}
%\end{figure}

%\subsection{A subsection}

%\lipsum[5]

%\begin{align*}
%1 + 1 + 1 &= 2 + 1 \\ &= 3.
%\end{align*}

%\subsubsection{A subsubsection}

%\lipsum[17]

%Here is a citation~\cite{greenwade93} with URL. Here is another
%citation~\cite{MR4}, which is the earliest-numbered Math Review with a
%functioning DOI: MR0000004. Finally, a reference to
%equation~\eqref{eqn:eq1}. And a reference to Figure~\ref{fig:plot}.

\section{acknowledgements}
The authors thank Anne Dranowski and Fern Gossow for helpful conversations. The second author thanks MPIM-Bonn where much of this note was written, and his first teacher in representation theory, Adriano Garsia, for instilling his interest in the combinatorics of Specht modules.
This work is partially supported by the Australian Research Council grant DP230100654.

%% if you use biblatex then this generates the bibliography
%% if you use some other method then remove this and do it your own way

\printbibliography

\end{document}